\documentclass[preprint,12pt]{elsarticle}

\usepackage{amssymb}
\usepackage{amsmath}
\usepackage{amsthm}
\usepackage{xcolor}
\newtheorem{theorem}{Theorem}[section]

\newtheorem{corollary}[theorem]{Corollary}

\newtheorem{remark}[theorem]{Remark}
\newtheorem{definition}[theorem]{Definition}

\journal{Nonlinear Analysis: Theory, Methods and Applications}

\begin{document}
	
	\begin{frontmatter}
		
		
		
		\title{Lower order term for the fractional Laplacian with a Hardy potential}
		
		\author[label 1]{Rub\'en Fi\~nana}\corref{correspondingauthor}
		\ead{rfa803@ual.es}
		\cortext[correspondingauthor]{Corresponding author.}
		
		\author[label 2]{Alexis Molino}
		\ead{amolino@ugr.es}
		
		

		\begin{abstract}
			This paper is concerned with the following semilinear elliptic equation involving the fractional Laplacian: $$(-\Delta)^s u+ g|u|^{p-1}u= \lambda \frac{u}{|x|^{2s}}+f(x),$$ in a bounded domain $\Omega$ of $\mathbb{R}^N\,(N>2s)$, subject to the zero Dirichlet condition in $\mathbb{R}^N\setminus \Omega$, where $0<g\in L_{loc}^1(\Omega)$, $p>1$ and $f\in L^{(p+1)/p}_g(\Omega)$. Under certain integrability  condition on $g$, the existence of solution  is proven for every $\lambda\in \mathbb{R}$. Moreover, the regularity of solution is also obtained.
		\end{abstract}
		
		
		
		\begin{keyword}
			Fractional Laplacian\sep Hardy Potential \sep Lower order term
			
			
			\MSC[2020] 35R11 \sep 35B65 \sep 35A01 \sep 35S15
			
		\end{keyword}
		
	\end{frontmatter}
	
		
		
		\section{Introduction}
		
		In recent years, semilinear elliptic equations involving the fractional \\Laplacian with the Dirichlet condition:
		\begin{equation}\label{problemaa}
			\begin{cases}
				\displaystyle(-\Delta)^s u = f(x,u), &  \text{in $\Omega$,}\\
				\mathnormal{u}=0, & \text{in $\mathbb{R}^N\setminus\Omega$,}
			\end{cases}
		\end{equation}
		with $s\in (0,1)$ and $N>2s$, have been extensively studied, constituting in itself a field of interest for the researcher in Nonlinear Analysis and PDE. There is a large amount of literature depending on the different cases of $f(x,u)$. For instance, when
		$f(x,u)=f(x)$  is a measurable function and $\Omega$ is a bounded Lipschitz domain, in Leonori et al. \cite{Leonori} the authors obtained results concerning the existence, uniqueness and summability of the solution with respect to the summability of $f$.
		
		There are also Poho\v{z}aev type results for the fractional Laplacian. In Fall and Weth \cite{FallWeth} the authors obtain nonexistence of positive solutions for a star-shaped domain with respect to the origin, $\Omega$ and $f: \overline{\Omega}\setminus \{0\} \times [0,+\infty)\rightarrow \mathbb{R}$ locally Lipschitz in the second variable and  supercritical in the sense that  the function $\lambda \rightarrow \lambda^{1-2^*_s}f(\lambda^{-2/(N-2s)}x,\lambda u)$ is nondecreasing on $[1,+\infty)$, a typical example is  $f(x,u)=|u|^{p-1}u$ for $p\geq 2^*_s-1$, where ${2^*_s=2N/(N-2s)}$. After that, there were results concerning changing-sign solutions with a more relaxed condition on the locally Lipschitz nonlinearities $f$. It is worth highlighting the work of Ros-Oton and Serra \cite{Ros} given a Poho\v{z}aev type identity for the fractional Laplacian. However,  if $\Omega$ is an annular type domain there is a nontrivial solution for $f(x,u)=|u|^{2^*_s-2}u$ (see Secchi et al. \cite{Secchi}). There is also a Brezis-Nirenberg type result for \eqref{problemaa}. To be more precise, for
		$f(x,u)=\lambda u^q+u^{2^*_s-1}$ with $q\in (1,2^*_s-1)$, it is guaranteed by Barrios et al. \cite{Barrios2} the existence of positive solutions for every $\lambda >0$.
		
		An important case for problem \eqref{problemaa} is the effect of the Hardy potential term, i.e., $f(x,u)=\lambda u/|x|^{2s}+f(x)$ with $0 \in \Omega$:
		\begin{equation}\label{problema4}
			\begin{cases}
				\displaystyle(-\Delta)^s u = \lambda \frac{u}{|x|^{2s}}+f(x), &  \text{in $\Omega$,}\\
				\mathnormal{u}=0, & \text{in $\mathbb{R}^N\setminus\Omega$.}
			\end{cases}
		\end{equation}  
		This equation arises in problems of relativistic matter stability in magnetic fields and was developed by the pioneering work of Abdellaoui et al. \cite{Ireneo}. As it occurred in Boccardo, Orsina and Peral \cite{Boc} for the local case ($s=1$), they obtained existence and regularity results for $0<\lambda \leq \Lambda_{N,s}$, where $\Lambda_{N,s}$ is a constant defined as:
		\[\Lambda_{N,s}= 2^{2s}\frac{\Gamma^2\left(\frac{N+2s}{4}\right)}{\Gamma^2\left(\frac{N-2s}{4}\right)}.\]
		Here $\Gamma$ is the usual gamma function. Besides 
		$$\lim_{s\rightarrow 1} \Lambda_{N,s}=\mathcal{H}^2=\left(\frac{N-2}{2}\right)^2,$$
		being $\mathcal{H}^2$ the classical Hardy constant.
		
		Furthermore, problem  \eqref{problema4}  is related to the relativistic Hardy inequality which was proved by Herbst in \cite{Her} (see also \cite{Barrios} and the references therein):
		
		For $u\in H^s_0(\Omega)$ and $0\in \Omega$,  it follows that
		\begin{equation}\label{DesigualdadHardy}
			\frac{c_{N,s}}{2}\int_{\mathbb{R}^N}\int_{\mathbb{R}^N} \frac{|u(x)-u(y)|^2}{|x-y|^{N+2s}}\geq \Lambda_{N,s}\int_{\Omega} \frac{u^2}{|x|^{2s}}
		\end{equation}		
		$$c_{N,s}:=\left(\displaystyle \int_{\mathbb{R}^N}\frac{1-\cos{(y_1)}}{|y|^{N+2s}}\right)^{-1}>0.$$
		The main purpose of this work is to improve and generalize the results of \eqref{problema4} obtained in \cite{Ireneo},  by adding a lower order term. Concretely, to study the existence and regularity of solutions for the problem:
		\begin{equation}\label{problemaoriginal}
			\begin{cases}
				\displaystyle(-\Delta)^s u+ g(x)|u|^{p-1}u= \lambda \frac{u}{|x|^{2s}}+f(x), &  \text{in $\Omega$,}\\
				\mathnormal{u}=0, & \text{in $\mathbb{R}^N\setminus\Omega$,}
			\end{cases}
		\end{equation}
		where $\Omega\subset \mathbb{R}^N$ ($N>2s$) is a bounded domain with smooth boundary and $0\in \Omega$, $s\in (0,1)$, $p>1$, $\lambda\in \mathbb{R}$ and $f\in L^{(p+1)/p}_g(\Omega)$, i.e. $|f|^{(p+1)/p}g\in L^1(\Omega)$. The function $g$ is a positive function in $L_{loc}^1(\Omega)$. The particular case $g\equiv 0$ is studied in \cite{Ireneo}. On the other hand, the case $g\equiv 1$ was developed by Mi et al. in \cite{Mi} for positive solutions and $f$ a nonnegative function. As can be seen below, the results for existence and summability of solutions obtained in this work improve those obtained in \cite{Mi}. Indeed, in Theorem \ref{teo1}, under the following mild integrability restriction:
		\begin{equation}\label{condicion}
			\frac{1}{|x|^{2s}g^{\frac{2}{p+1}}}\in L^{\frac{p+1}{p-1}}(\Omega).
		\end{equation}
		the existence of a solution to problem \eqref{problemaoriginal} is proven for every $\lambda \in \mathbb{R}$. Observe that condition \eqref{condicion}, with $g\equiv 1$, implies that $p>2^*_{s}-1$. Therefore, Theorem \ref{teo1} improves  \cite[Theorem 1.4]{Mi} where the authors find solutions for ${0<\lambda\leq\Lambda_{N,s}}$.
		
		Additionally, if  $f\in L^q_g(\Omega)$ for $q\geq (p+1)/p$ and the following further condition is satisfied
		\begin{equation}\label{cond3}
			\int_{\Omega} |x|^{\frac{2sp q}{1-p}}g^{1-\frac{p q}{p-1}}<+\infty,
		\end{equation}
		Theorem \ref{teoreg} guarantees that the solutions of \eqref{problemaoriginal} belong to $H^s_0(\Omega)\cap L_g^{pq}(\Omega)$ for every $\lambda\in\mathbb{R}$. In the case $g\equiv 1$, Theorem \ref{teo1} and Theorem \ref{teoreg} establish the existence of a solution in $H^s_0(\Omega)\cap L^{pq}(\Omega)$, for $q\geq (p+1)/p$ and $\lambda\in \mathbb{R}$ when $p>2^*_s-1$ (condition \eqref{condicion}) and $q<\frac{N}{2s}(1-\frac{1}{p})$ (condition \eqref{cond3}). As a consequence, there exists an improvement about the regularity of the solutions with respect to \cite[Theorem 1.5]{Mi}, where the solutions are in $H^s_0(\Omega)\cap L^{\frac{N(p(q-1)+1)}{N-2s}}(\Omega)$ for $0<\lambda<\Lambda_{N,s} 4p(q-1)/(p(q-1)+1)^2$. Moreover, their interval of $\lambda$, unlike ours, converges to zero as $p\rightarrow +\infty$.

		It is worth noting that the low regularity of $g$, as well as the absence of constraints of $\lambda$, allow the extension of the results obtained to a significant number of problems, see Corollaries \ref{twohardy} and \ref{problemboundedg}.
		
		This paper is organized as follows: Section 2 introduces the functional setting of problem \eqref{problemaoriginal} and provide some previous concepts. Section 3 covers the existence of solutions in  Theorem \ref{teo1} and Theorem \ref{propo} by imposing some conditions of integrability of $g$. Finally, in Section 4, Theorem \ref{teoreg} is devoted to the regularity of the solution when the summability of the function $f$ is increased under some additional assumptions. In this last section, the regularity effect of the lower order term is evidenced by relating the results to some previous works depending on the value of $\lambda$. Technical proofs for a fundamental inequality is provided in \ref{appendix:A}.
		
		\section{Preliminaries}
		In this section, we give basic notations, definitions, and tools that will be used
		throughout the paper. The fractional Laplacian operator, denoted as $(-\Delta)^s$, is defined by:
		\begin{equation*}
			(-\Delta)^su(x)=c_{N,s}\, \text{P.V.}\int_{\mathbb{R}^N} \frac{u(x)-u(y)}{|x-y|^{N+2s}} \quad x\in \mathbb{R}^N,
		\end{equation*}
		where $s\in(0,1)$, P.V. refers to the Cauchy principal value and the norma\-lization constant $c_{N,s}$ is given in \eqref{DesigualdadHardy}, consult \cite{DGV} for further details.

		One essential space associated to this nonlocal operator is the fractional Sobolev space 
		\[H^s(\mathbb{R}^N)=\left\{ u\in L^2(\mathbb{R}^N): \frac{u(x)-u(y)}{|x-y|^{N/2+s}}\in L^2(\mathbb{R}^N\times \mathbb{R}^N) \right\}.\]
		In particular, for the problem \eqref{problemaoriginal} the space $H_0^s(\Omega)$, where $\Omega \subset \mathbb{R}^N$, is considered. It is described as:
		\[H_0^s(\Omega)=\{u\in H^s(\mathbb{R}^N): u\equiv 0 \textrm{ a.e. in } \mathbb{R}^N\setminus \Omega \}.\]
		Moreover, $H_0^s(\Omega)$ is a Hilbert space with the scalar product
		\[
		\langle u, w \rangle = \frac {c_{N,s}}2\int_{\mathbb{R}^N}  \int_{\mathbb{R}^N} \frac{(u(x)-u(y))(w(x)-w(y))}{|x-y|^{N+2s}},\quad  u,w\in H_0^s(\Omega),
		\]
		and the norm 
		\[ \|u\|^2_{H_0^s(\Omega)}=\|(-\Delta)^\frac{s}{2}u\|^2_{L^2(\mathbb{R}^N)}.\] 
		In regards to the properties of this functional space, see \cite{DGV}, it is well known that in the case $u, w \in H_0^s(\Omega)$, then  
		\[\displaystyle \int_\Omega w (-\Delta)^s u= \int_\Omega((-\Delta)^s w ) u  = \int_{\mathbb{R}^N} (-\Delta)^\frac{s}{2}u (-\Delta)^\frac{s}{2}w=\langle u, w \rangle.\]
		For $r\geq 1$ and $0<g\in L_{loc}^1(\Omega)$, we denote by $L^{r}_g(\Omega)$ the linear space of all measurable functions $f$ such that $|f|^{r}g\in L^1(\Omega)$, endowed with the seminorm 
		\[ |f|_{L^{r}_g(\Omega)}=\left(\int_{\Omega}|f|^{r}g\right)^{\frac{1}{r}}.\]	
		The space $E$ is defined by:
		\[E= H^s_0(\Omega)\cap L^{p+1}_g(\Omega),\]
		with the associated norm
		\[ \|u\|_E=\|u\|_{H^s_0(\Omega)}+|u|_{L^{p+1}_g(\Omega)}.\]
		Additionally, for every $f\in L^{(p+1)/p}_g(\Omega)$, there exists a functional $v_f\in E^*$ such that  
		\begin{equation}\label{DualE}
			\langle v_f, h
			\rangle = \int_{\Omega} fhg,\textit{ $\forall h\in L^{p+1}_g(\Omega)$.}
		\end{equation}
		Indeed, by the H\"older inequality and the definition of $E$,
		\begin{align*}
			\langle v_f, h
			\rangle= \int_{\Omega} fhg&\leq \|fg^{\frac{p}{p+1}}\|_{L^{\frac{p+1}{p}}(\Omega)} \|hg^{\frac{1}{p+1}}\|_{L^{p+1}(\Omega)}\\&= \|f\|_{L_g^{\frac{p+1}{p}}(\Omega)} \|h\|_{L_g^{p+1}(\Omega)}\leq \|f\|_{L_g^{\frac{p+1}{p}}(\Omega)} \|h\|_{E},
		\end{align*}
		so $v_f\in E^*$.

		\begin{definition}\label{def:sol}
			A function $u \in E$ is a solution to the problem \eqref{problemaoriginal}, if for $0<g\in L_{loc}^1(\Omega)$ and $f\in L_g^{(p+1)/p}(\Omega)$,
			\[\int_{\mathbb{R}^N}(-\Delta)^{\frac{s}{2}}u(-\Delta)^{\frac{s}{2}}w + \int_{\Omega}|u|^{p-1}uwg-\lambda \int_{\Omega}\frac{u}{|x|^{2s}}w-\int_{\Omega}fwg=0,\]
			for all $w\in E$.
		\end{definition}
		
		In order to apply variational methods, note that these solutions can be characterized as critical points of the functional $\phi_{\lambda}\in C^1(E,\mathbb{R})$ defined by:
		\begin{equation*}
			\phi_{\lambda} (u)= \frac{1}{2}\|u\|^2_{H^s_0(\Omega)}+\frac{1}{p+1}\int_{\Omega}|u|^{p+1}g-\frac{\lambda}{2} \int_{\Omega}\frac{u^2}{|x|^{2s}}-\int_{\Omega}fug.
		\end{equation*}
		
		\section{Existence of solution}
		
		The main purpose of this section is to establish the existence of a solution to \eqref{problemaoriginal} for every $\lambda\in \mathbb{R}$. 
		The Hardy inequality \eqref{DesigualdadHardy} and the previously mentioned integrability condition \eqref{condicion} are essential elements to reach the result.
		
		Furthermore, from \eqref{condicion} it is deduced that if $u\in H^s_0(\Omega)\cap L^{p+1}_g(\Omega)$, then
		\begin{equation}\label{acotacion}
			\int_\Omega \frac{u^2}{|x|^{2s}} \leq C_1\left(\int_{\Omega}|u|^{p+1}g\right)^{\frac{2}{p+1}}<+\infty,
		\end{equation}
		i.e., $u^2g^{2/(p+1)}\in L^{(p+1)/2}(\Omega)$.
		
		Indeed, by using the H\"older inequality with exponent $(p+1)/2$ and \eqref{condicion}, 
		\begin{align*}
			\int_\Omega \frac{u^2}{|x|^{2s}}=&\int_{\Omega} \frac{u^2g^{\frac{2}{p+1}}}{|x|^{2s}g^{\frac{2}{p+1}}}\leq 
			\| u^2g^{\frac{2}{p+1}}\|_{L^{\frac{p+1}{2}}(\Omega)} \left\| \frac{1}{|x|^{2s}g^{\frac{2}{p+1}}} 
			\right\|_{L^{\frac{p+1}{p-1}}(\Omega)}
			\\&\quad=C_1\left(\int_{\Omega}|u|^{p+1}g\right)^{\frac{2}{p+1}}< +\infty.
		\end{align*}
		Therefore 
		\begin{equation*}
			\int_\Omega \frac{u^2}{|x|^{2s}} \leq  C_1\left(\int_{\Omega}|u|^{p+1}g\right)^{\frac{2}{p+1}}< +\infty.
		\end{equation*}
		Along with condition \eqref{acotacion} and Hardy inequality \eqref{DesigualdadHardy}, a consequence of the Variational Principle of Ekeland  \cite[Theorem 7.2]{Cha} is needed.
		\begin{theorem}\label{ekeland}
			If $X$ is a Banach space and $\phi\in C^1(X,\mathbb{R})$ is bounded from below, then there exists a minimizing sequence $u_n$ for $\phi$ such that $\phi(u_n)\rightarrow \inf_X(\phi)$ and $\phi'(u_n)\rightarrow 0$ in $X^*$ as $n\rightarrow+\infty$.
		\end{theorem}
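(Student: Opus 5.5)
The plan is to derive the statement directly from the Ekeland Variational Principle in its standard metric form. That form says: if $(M,d)$ is a complete metric space, $F:M\to\mathbb{R}$ is lower semicontinuous and bounded from below, $\varepsilon>0$ and $u\in M$ satisfies $F(u)\le \inf_M F+\varepsilon$, then for every $\delta>0$ there exists $v\in M$ such that
\begin{equation*}
F(v)\le F(u),\qquad d(u,v)\le \delta,\qquad F(w)>F(v)-\frac{\varepsilon}{\delta}\, d(w,v)\quad \forall w\ne v.
\end{equation*}
We apply this with $M=X$, $d(u,v)=\|u-v\|_X$ and $F=\phi$. The space is complete because $X$ is Banach, and $\phi$ is continuous (indeed $C^1$) and bounded from below by hypothesis.

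\textbf{Construction of the sequence.} Set $c=\inf_X\phi>-\infty$. For each $n\in\mathbb{N}$, take $\varepsilon_n=1/n^2$ and pick $u^0_n\in X$ with $\phi(u^0_n)\le c+\varepsilon_n$. Apply the principle with $\delta=\sqrt{\varepsilon_n}=1/n$ to obtain $u_n$ with the following properties:
\begin{equation*}
c\le \phi(u_n)\le \phi(u^0_n)\le c+\frac{1}{n^2},\qquad \phi(w)>\phi(u_n)-\frac1n\|w-u_n\|_X \quad \forall w\ne u_n.
\end{equation*}
The first chain of inequalities gives $\phi(u_n)\to c$ immediately, so $(u_n)$ is a minimizing sequence.

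\textbf{Derivative estimate.} Fix $h\in X$ with $\|h\|_X=1$ and $t>0$, and insert $w=u_n+th$ into the second inequality. This gives
\begin{equation*}
\frac{\phi(u_n+th)-\phi(u_n)}{t}>-\frac1n.
\end{equation*}
Letting $t\to0^+$ and using that $\phi$ is Fr\'echet (hence G\^ateaux) differentiable, we get $\langle\phi'(u_n),h\rangle\ge -1/n$. Replacing $h$ by $-h$ yields $|\langle\phi'(u_n),h\rangle|\le 1/n$ for every unit vector $h$. Taking the supremum over such $h$ gives $\|\phi'(u_n)\|_{X^*}\le 1/n\to0$, which completes the proof.

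\textbf{Main obstacle.} The only nontrivial ingredient is the Ekeland principle itself. If it must be proved rather than quoted from \cite{Cha}, the approach is the usual iterative one. Starting from $u$, define the closed sets
\begin{equation*}
S(x)=\{w:\ F(w)+\tfrac{\varepsilon}{\delta}d(w,x)\le F(x)\}.
\end{equation*}
Then choose $x_{k+1}\in S(x_k)$ with $F(x_{k+1})\le \inf_{S(x_k)}F+2^{-k}$. The sets are nested and their diameters shrink to zero, so completeness gives a single point $v$ in their intersection, and this $v$ has the required properties. Apart from this, the argument is just the differentiation step above.
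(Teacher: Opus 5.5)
Your proposal is correct and takes essentially the paper's approach: the paper gives no proof, quoting the statement from the literature as a direct consequence of Ekeland's variational principle. Your derivation is that standard deduction — the strong form with $\varepsilon_n=1/n^2$ and $\delta_n=1/n$, then differentiating along $u_n\pm th$ to get $\|\phi'(u_n)\|_{X^*}\le 1/n$ — and your nested closed sets $S(x_k)$ sketch of the principle itself is also sound.
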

		This theorem establishes the existence of a critical point of $\phi_{\lambda}$, i.e. a solution to problem \eqref{problemaoriginal}, in the absence of the lower semicontinuity of $\phi_{\lambda}$.
		\begin{theorem}\label{teo1}
			Let $\Omega\subset \mathbb{R}^N$ be a smooth bounded domain such that $0\in \Omega$,  $f \in L^{(p+1)/p}_g(\Omega)$ with $g$ satisfying  inequality \eqref{condicion}. Then, for every $\lambda\in \mathbb{R}$ there exists a solution $u\in E$ to the problem \eqref{problemaoriginal}. 
			
		\end{theorem}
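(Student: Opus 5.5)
We will obtain the solution as a minimizer-type critical point of $\phi_\lambda$ on $E$, using Theorem \ref{ekeland}. The first step is to check that $\phi_\lambda$ is coercive and bounded from below on $E$ for every $\lambda\in\mathbb{R}$. For $\lambda\le 0$ the Hardy term is nonnegative in $\phi_\lambda$ and can be dropped. For $\lambda>0$ we use \eqref{acotacion}:
\[
\frac{\lambda}{2}\int_\Omega \frac{u^2}{|x|^{2s}}\le \frac{\lambda C_1}{2}\Big(\int_\Omega |u|^{p+1}g\Big)^{\frac{2}{p+1}} .
\]
Since $2<p+1$, Young's inequality gives that this is at most $\frac{1}{2(p+1)}\int_\Omega|u|^{p+1}g + C(\lambda,p,C_1)$. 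Similarly, the bound following \eqref{DualE} gives
\[
\Big|\int_\Omega fug\Big|\le \|f\|_{L^{(p+1)/p}_g}|u|_{L^{p+1}_g}\le \frac{1}{4(p+1)}\int_\Omega|u|^{p+1}g+C(f).
\]
Hence $\phi_\lambda(u)\ge \frac12\|u\|^2_{H^s_0}+\frac{1}{4(p+1)}|u|^{p+1}_{L^{p+1}_g}-C$. This shows that $\phi_\lambda$ is bounded below and coercive in $\|\cdot\|_E$. We also check that $\phi_\lambda\in C^1(E,\mathbb{R})$; the Hardy term is a continuous quadratic form on $E$ by \eqref{acotacion}.

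Next, Theorem \ref{ekeland} provides a sequence $u_n\in E$ with $\phi_\lambda(u_n)\to\inf\phi_\lambda$ and $\phi_\lambda'(u_n)\to0$ in $E^*$. By coercivity, $u_n$ is bounded in $H^s_0(\Omega)$ and in $L^{p+1}_g(\Omega)$. Up to a subsequence, $u_n\rightharpoonup u$ weakly in $H^s_0(\Omega)$, $u_n\to u$ in $L^2(\Omega)$ and a.e. in $\Omega$ by the compact embedding, and $u_n g^{1/(p+1)}\rightharpoonup u g^{1/(p+1)}$ weakly in $L^{p+1}(\Omega)$. Fatou's lemma then gives $u\in E$.

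The main obstacle is passing to the limit in $\langle\phi_\lambda'(u_n),w\rangle\to0$ for fixed $w\in E$, in the two nonlinear or singular terms. For the Hardy term, we split $u_n^2/|x|^{2s}$ as the product of $(u_ng^{1/(p+1)})^2$, bounded in $L^{(p+1)/2}$, and $|x|^{-2s}g^{-2/(p+1)}\in L^{(p+1)/(p-1)}$. Since the first factor is $(p+1)/2>1$-integrable and bounded, while the second is fixed, Vitali's theorem gives $\int u_n^2|x|^{-2s}\to\int u^2|x|^{-2s}$ (uniform integrability plus a.e. convergence). The same argument handles $\int u_nw|x|^{-2s}$, since $u_nw$ is controlled by Hölder in the same way. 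For the term $\int|u_n|^{p-1}u_nwg$, we write it as $\int(|u_n|^{p-1}u_n g^{p/(p+1)})(wg^{1/(p+1)})$. The first factor is bounded in $L^{(p+1)/p}$ and converges a.e. to $|u|^{p-1}ug^{p/(p+1)}$, so it converges weakly in $L^{(p+1)/p}$, while $wg^{1/(p+1)}\in L^{p+1}$. The linear terms pass to the limit by weak convergence, using \eqref{DualE} for $f$. Together these give $\langle\phi_\lambda'(u),w\rangle=0$ for all $w\in E$, so $u$ is a solution in the sense of Definition \ref{def:sol}.

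If the Vitali step for the Hardy term proves delicate, we would instead test with $w=u_n-u$ and use a Brezis–Lieb type splitting. However, the uniform integrability coming from the strict inequality $p>1$ in \eqref{condicion} should suffice.
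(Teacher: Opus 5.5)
Your proof is correct. Its skeleton is the paper's: coercivity, Theorem \ref{ekeland}, weak limits in $E$, and passage to the limit in $\langle\phi_\lambda'(u_n),w\rangle$. The nonlinear term is treated exactly as in the paper, and your Young-inequality absorption is a tidier version of the paper's coercivity estimate.

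The genuinely different step is the Hardy term.

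\textbf{The paper's route.} It invokes the Hardy inequality \eqref{DesigualdadHardy} to see that $v\mapsto\int_\Omega vw|x|^{-2s}$ is a bounded linear functional on $H^s_0(\Omega)$. Weak convergence in $H^s_0(\Omega)$ then suffices, with no a.e.\ convergence or integrability argument.

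\textbf{Your route.} You never use \eqref{DesigualdadHardy}. Set $h=|x|^{-2s}g^{-2/(p+1)}\in L^{(p+1)/(p-1)}(\Omega)$. H\"older on a measurable $A\subset\Omega$ gives $\int_A u_n^2|x|^{-2s}\le \sup_n|u_n|^2_{L^{p+1}_g(\Omega)}\,\|h\|_{L^{(p+1)/(p-1)}(A)}$. This is small uniformly in $n$ when $|A|$ is small, precisely because $(p+1)/(p-1)<\infty$; that is the rigorous content of your ``$(p+1)/2>1$'' remark. Vitali's theorem then finishes.

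\textbf{What yours buys.} It is slightly more work for the linear term, but it gives more. It yields convergence of the quadratic term $\int_\Omega u_n^2|x|^{-2s}$ along every weakly convergent sequence in $E$, under \eqref{condicion} alone. Hence $\phi_\lambda$ is weakly sequentially lower semicontinuous on $E$. Your argument therefore already shows that the limit $u$ is a global minimizer of $\phi_\lambda$, and one could even bypass Ekeland via the direct method. That is the conclusion of Theorem \ref{propo}, for which the paper assumes the stronger condition \eqref{Cond2}.
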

		\begin{proof}
			With the aim of guaranteeing the existence of a solution,  the coercivity and the lower boundedness of the functional $\phi_{\lambda}$ are needed.\par
			There are two possible scenarios depending on the value of $\lambda$:
			\begin{enumerate}
				\item[a)] Case $\lambda>0$.\par 
				Because of \eqref{acotacion} and \eqref{DualE} for $u\in E$, 
				\begin{align*}
					\phi_{\lambda} (u)\geq&  \frac{1}{2}\|u\|^2_{H^s_0(\Omega)}+\frac{1}{p+1}\int_{\Omega}|u|^{p+1}g-\frac{\lambda}{2}C_1\left(\int_{\Omega}|u|^{p+1}g\right)^{\frac{2}{p+1}}\\&-\langle f,u\rangle\\
					\geq& \frac{1}{2}\|u\|^2_{H^s_0(\Omega)}+\frac{1}{p+1}\int_{\Omega}|u|^{p+1}g-\frac{\lambda}{2} C_1\left(\int_{\Omega}|u|^{p+1}g\right)^{\frac{2}{p+1}}\\&-\| f\|_{E^*}\|u\|_{E}\\
					=&\frac{1}{2}\|u\|^2_{H^s_0(\Omega)}+\frac{1}{p+1}|u|^{p+1}_{L^{p+1}_g(\Omega)}-\frac{\lambda}{2}C_1|u|^2_{L^{p+1}_g(\Omega)}-\| f\|_{E^*}\|u\|_{E}.
				\end{align*}
				
				\item[b)] Case $\lambda\leq 0$.\par 
				It is enough to use \eqref{DualE} for $u\in E$, 
				\begin{align*}
					\phi_{\lambda} (u)&= \frac{1}{2}\|u\|^2_{H^s_0(\Omega)}+\frac{1}{p+1}\int_{\Omega}|u|^{p+1}g-\frac{\lambda}{2} \int_{\Omega}\frac{u^2}{|x|^{2s}}-\langle f,u\rangle\\
					&\geq \frac{1}{2}\|u\|^2_{H^s_0(\Omega)}+\frac{1}{p+1}|u|^{p+1}_{L^{p+1}_g(\Omega)}-\| f\|_{E^*}\|u\|_{E}.
				\end{align*}
			\end{enumerate}
			So for $\lambda\in \mathbb{R}$, $\phi_{\lambda}$ is coercive, yielding its lower boundedness. The claim is thus proved.

			Consequently, Theorem \ref{ekeland} can be applied to obtain a minimizing sequence, denoted by $u_n\in E$, such that 
			\[
			\phi_{\lambda}(u_n)\rightarrow \inf_{E}{\phi_{\lambda}},
			\]
			and for all $w\in E$, \begin{equation}\label{convergencia}
				\langle \phi'_{\lambda}(u_n),w\rangle\rightarrow 0,
			\end{equation}
			where 
			\begin{align*}
				\langle \phi'_{\lambda}(u_n),w\rangle=&\int_{\mathbb{R}^N}(-\Delta)^{\frac{s}{2}}u_n(-\Delta)^{\frac{s}{2}}w + \int_{\Omega}|u_n|^{p-1}u_nwg
				\\&-\lambda \int_{\Omega}\frac{u_n}{|x|^{2s}}w-\int_{\Omega}fwg.
			\end{align*}
			Due to the coercivity of the functional, $u_n$ is bounded in $E$, thus there exists a subsequence, still denoted by $u_n$, such that $u_n\rightharpoonup u$ in $E$. Therefore,
			\begingroup 
			\renewcommand{\theenumi}{(\alph{enumi})}
			\renewcommand{\labelenumi}{\theenumi}
			\begin{enumerate}
				\item $u_n\rightharpoonup u$ in $H^s_0(\Omega)$\label{conv1}.
				\item $u_n\rightharpoonup u$ in $L^{p+1}_g(\Omega)$\label{conv2}.
			\end{enumerate}
			\endgroup
			Taking into account \eqref{convergencia}, the limit $u$ is a solution of \eqref{problemaoriginal} if the following claim is satisfied
			\begin{equation}\label{limitsolution}
				\langle \phi'_{\lambda}(u_n),w\rangle\rightarrow \langle \phi'_{\lambda}(u),w\rangle= 0.
			\end{equation}
			Indeed, the convergence \ref{conv1} directly implies that 
			\begin{equation}\label{limitsolution1}
				\int_{\mathbb{R}^N}u_n(-\Delta)^{s}w\rightarrow \int_{\mathbb{R}^N}u(-\Delta)^{s}w.
			\end{equation}
			Moreover, by \ref{conv1}, $u_n\rightarrow u$ in $L^m(\Omega)$ for $m\in [1,2^*_s)$, so $u_n \rightarrow u$ a.e., therefore  $|u_n|^{p-1}u_n\rightarrow |u|^{p-1}u$ a.e.
			On the other hand, due to \ref{conv2} $u_n$ is bounded in $L^{p+1}_g(\Omega)$, thus $|u_n|^{p-1}u_n g^{p/(p+1)}$ is bounded in $ L^{(p+1)/p}(\Omega)$. That fact, along with convergence a.e., implies that $|u_n|^{p-1}u_n g^{p/(p+1)}\rightharpoonup |u|^{p-1}u g^{p/(p+1)}$ in $L^{(p+1)/p}(\Omega)$. As a consequence
			\begin{equation}\label{limitsolution2}
				\int_{\Omega}|u_n|^{p-1}u_nwg\rightarrow  \int_{\Omega}|u|^{p-1}uwg.
			\end{equation}
			Finally, consider $G_w: H^s_0(\Omega)\rightarrow \mathbb{R}$ defined as
			\[G_w(u)=\int_{\Omega}\frac{u}{|x|^{2s}}w.\]
			Notice that, if the H\"older inequality and Hardy inequality \eqref{DesigualdadHardy} are applied,
			\[\int_{\Omega}\frac{u}{|x|^{2s}}w\leq \left\|\frac{u}{|x|^{s}}\right\|_{L^2(\Omega)} \left\|\frac{w}{|x|^{s}}\right\|_{L^2(\Omega)} \]\[\leq \frac{1}{\Lambda_{N,s}}\|u\|_{H^s_0(\Omega)}\|w\|_{H^s_0(\Omega)}.\]
			Hence, $G_w$ is a continuous linear functional and, by \ref{conv1}, ${G_w(u_n)\rightarrow G_w(u)}$. This implies that
			\begin{equation}\label{limitsolution3}
				\int_\Omega \frac{u_n}{|x|^{2s}}w\rightarrow \int_\Omega \frac{u}{|x|^{2s}}w.
			\end{equation}
			As a result of the limits \eqref{limitsolution1},  \eqref{limitsolution2} and  \eqref{limitsolution3}, the claim \eqref{limitsolution} is proved.
		\end{proof}

		Although $u$ is a critical point of $\phi_{\lambda}$ and therefore a solution of $\eqref{problemaoriginal}$, it does not necessarily correspond to a minimum of the functional $\phi_\lambda$. In order to obtain this correspondence,  an additional assumption is necessary:
		\begin{equation}\label{Cond2}
			\int_{\Omega}|x|^{\frac{2ts}{2-t}}g^{\frac{2t}{(p+1)(2-t)}}<+\infty \text{ for some $t\in (2,p+1)$},
		\end{equation}
		which guarantees the lower semicontinuity of $\phi_\lambda.$
		
		\begin{remark}
			Note that \eqref{Cond2} implies \eqref{condicion}. Indeed, condition \eqref{Cond2} can be rewritten as: 
			\[\frac{1}{|x|^{2s}g^{\frac{2}{p+1}}}\in L^{\frac{t}{t-2}}(\Omega) \text{ with $t\in (2,p+1)$}.\]
			On the other hand, $t/(t-2)>(p+1)/(p-1)$, so $L^{t/(t-2)}(\Omega)\subset L^{(p+1)/(p-1)}(\Omega)$. As a consequence, \eqref{condicion}  is satisfied.
		\end{remark}
		
		\begin{theorem}\label{propo}
			
			If condition \eqref{Cond2} is satisfied, then the solution of \eqref{problemaoriginal} obtained in Theorem \ref{teo1}, is a minimum of the functional $\phi_{\lambda}$.
		\end{theorem}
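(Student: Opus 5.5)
We want to show that the weak limit $u$ of the minimizing sequence $u_n$ constructed in the proof of Theorem \ref{teo1} satisfies $\phi_\lambda(u)=\inf_E\phi_\lambda$. Since $\phi_\lambda(u_n)\to\inf_E\phi_\lambda$, it suffices to prove the weak lower semicontinuity estimate
\[
\phi_\lambda(u)\le\liminf_{n\to\infty}\phi_\lambda(u_n).
\]
We treat the four terms of $\phi_\lambda$ separately along the subsequence with $u_n\rightharpoonup u$ in $E$ and $u_n\to u$ a.e.

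\textbf{The three easy terms.}
\begin{itemize}
\item The norm $\frac12\|u_n\|^2_{H^s_0(\Omega)}$ is weakly lower semicontinuous by \ref{conv1}.
\item The term $\frac1{p+1}\int_\Omega|u_n|^{p+1}g$ is handled by Fatou's lemma, using a.e. convergence. Alternatively, use weak lower semicontinuity of the norm in $L^{p+1}$ applied to $u_ng^{1/(p+1)}\rightharpoonup ug^{1/(p+1)}$.
\item The linear term $\int_\Omega fu_ng$ converges to $\int_\Omega fug$ by \eqref{DualE} and weak convergence in $E$.
\end{itemize}
For $\lambda\le0$ the Hardy term $-\frac\lambda2\int u_n^2/|x|^{2s}$ is nonnegative, so Fatou again suffices. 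The only real issue is therefore $\lambda>0$, where we need
\[
\int_\Omega\frac{u_n^2}{|x|^{2s}}\to\int_\Omega\frac{u^2}{|x|^{2s}}.
\]
Here \eqref{Cond2} enters. This strong convergence is the main obstacle, since the Hardy term alone is not compact in $H^s_0(\Omega)$.

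\textbf{Strong convergence of the Hardy term.} We write $h=|x|^{-2s}g^{-2/(p+1)}$, so that $u_n^2|x|^{-2s}=(u_n^2g^{2/(p+1)})\,h$. By the rewriting in the Remark, $h\in L^{t/(t-2)}(\Omega)$ for some $t\in(2,p+1)$. We apply Vitali's convergence theorem, which requires a.e. convergence (which we have) and uniform integrability.

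For a measurable set $A\subset\Omega$, apply H\"older with exponents $t/2$ and $t/(t-2)$:
\[
\int_A\frac{u_n^2}{|x|^{2s}}\le\Big(\int_\Omega|u_n|^tg^{\frac{t}{p+1}}\Big)^{2/t}\,\|h\|_{L^{t/(t-2)}(A)}.
\]
Next we show the first factor is bounded. Write $|u_n|^tg^{t/(p+1)}=(|u_n|^{p+1}g)^{t/(p+1)}\cdot1$ and apply H\"older with exponent $(p+1)/t>1$:
\[
\int_\Omega|u_n|^tg^{\frac t{p+1}}\le|\Omega|^{1-\frac t{p+1}}\,|u_n|_{L^{p+1}_g(\Omega)}^t,
\]
which is bounded because $u_n$ is bounded in $E$ and $\Omega$ is bounded. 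Since $h^{t/(t-2)}\in L^1$, the factor $\|h\|_{L^{t/(t-2)}(A)}$ tends to $0$ uniformly as $|A|\to0$, by absolute continuity of the integral. Hence the family $u_n^2|x|^{-2s}$ is uniformly integrable on the bounded set $\Omega$. Together with a.e. convergence, Vitali gives convergence of the integrals.

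The point of requiring $t<p+1$ is exactly this slack. With $t=p+1$, which is \eqref{condicion}, we would only get boundedness and not uniform integrability.

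\textbf{Conclusion.} Combining the four terms gives
\[
\inf_E\phi_\lambda\le\phi_\lambda(u)\le\liminf_{n\to\infty}\phi_\lambda(u_n)=\inf_E\phi_\lambda,
\]
so $u$ is a minimum of $\phi_\lambda$. Since $u$ is also the solution obtained in Theorem \ref{teo1}, this proves the statement.
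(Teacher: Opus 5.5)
Your proof is correct, but it handles the Hardy term differently from the paper. The paper uses the slack $t<p+1$ to show that $|u_n|^tg^{t/(p+1)}$ is uniformly integrable. Vitali then gives $u_ng^{1/(p+1)}\to ug^{1/(p+1)}$ strongly in $L^t(\Omega)$, and along a subsequence this yields a dominating function $H\in L^t(\Omega)$. By H\"older and \eqref{Cond2}, $u_n^2/|x|^{2s}\le H^2h\in L^1(\Omega)$, and the paper concludes by dominated convergence. You instead prove uniform integrability of $u_n^2/|x|^{2s}$ itself and apply Vitali once. In your bound the $u_n$-factor is only bounded, and the smallness on small sets comes from the fixed weight $h$ through absolute continuity of the integral. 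This avoids the partial converse of dominated convergence and is shorter.

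Your closing remark about $t=p+1$ is wrong, although it does not affect the proof. Nothing in your argument needs $t<p+1$. With $t=p+1$, the first factor is bounded by $\sup_n|u_n|^2_{L^{p+1}_g(\Omega)}$. Also $\|h\|_{L^{(p+1)/(p-1)}(A)}\to 0$ uniformly as $|A|\to 0$, because $h\in L^{(p+1)/(p-1)}(\Omega)$ by \eqref{condicion}. So $u_n^2/|x|^{2s}$ is already uniformly integrable under \eqref{condicion} alone. Equivalently, $u_n^2g^{2/(p+1)}$ is bounded in $L^{(p+1)/2}(\Omega)$ and converges a.e., hence weakly, and $h$ lies in the dual space $L^{(p+1)/(p-1)}(\Omega)$; this is the same device the paper uses for the nonlinear term in Theorem \ref{teo1}. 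The slack $t<p+1$ is needed only in the paper's route, where the dominating function is available in $L^t$ only for $t<p+1$. Your route in fact shows that the minimality holds under \eqref{condicion} alone, so \eqref{Cond2} is not needed.
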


		\begin{proof}
			As it is commented before, the lower semicontinuity of the functional $\phi_{\lambda}$ is required, so if $u_n\rightharpoonup u \in E$, then 			
			\begin{align}\label{solutionminimum}
				\liminf_{n\rightarrow +\infty} &\left\{\frac{1}{2}\|u_n\|^2_{H^s_0(\Omega)}+\frac{1}{p+1}|u_n|^{p+1}_{L^{p+1}_g(\Omega)}-\frac{\lambda}{2}\int_{\Omega}\frac{u_n^2}{|x|^{2s}}-\int_{\Omega}fu_ng \right\}\\
				& \geq \frac{1}{2}\|u\|^2_{H^s_0(\Omega)}+\frac{1}{p+1}|u|^{p+1}_{L^{p+1}_g(\Omega)}-\frac{\lambda}{2}\int_{\Omega}\frac{u^2}{|x|^{2s}}-\int_{\Omega}fug.
				\nonumber
			\end{align}
			Due to the weak convergence of $u_n$ in $E$ and \eqref{DualE}, 
			\begin{equation}\label{solutionminimum1}
				\int_{\Omega}fu_ng \rightarrow \int_{\Omega}fug.
			\end{equation}
			Furthermore, the weak lower semicontinuity of the norm $\|\cdot\|_{H^s_0(\Omega)}$ implies that
			\begin{equation}\label{solutionminimum2}
				\liminf_{n\rightarrow +\infty}\left\{\frac{1}{2}\|u_n\|^2_{H^s_0(\Omega)}\right\}\geq \frac{1}{2}\|u\|^2_{H^s_0(\Omega)}.
			\end{equation}
			Additionally, if $u_n\rightharpoonup u \in E$,  $|u_n|^{p+1}g$ is bounded in $L^{1}(\Omega)$, which are non-negative functions. So, the hypothesis of the Fatou Lemma holds, satisfying that
			\begin{equation}\label{solutionminimum3}
				\liminf_{n\rightarrow +\infty}\left\{\frac{1}{p+1}|u_n|^{p+1}_{L^{p+1}_g(\Omega)}\right\} \geq \frac{1}{p+1}|u|^{p+1}_{L^{p+1}_g(\Omega)}.
			\end{equation}	
			As a final step, the Lebesgue Dominated Convergence Theorem guarantees that 
			\begin{equation}\label{solutionminimum4}
				\frac{\lambda}{2}\int_{\Omega}\frac{u_n^2}{|x|^{2s}}\rightarrow \frac{\lambda}{2}\int_{\Omega}\frac{u^2}{|x|^{2s}}.
			\end{equation}
			Indeed, by the Sobolev embedding, if  $u_n\rightharpoonup u$ in $H^s_0(\Omega)$, then $u_n\rightarrow u$ in $L^m(\Omega)$ with $m\in [1,2_s^*)$, so $u_n\rightarrow u$ a.e., and $|u_n|^{t}g^{t/(p+1)}\rightarrow |u|^{t}g^{t/(p+1)}$ a.e.\par 
			Because of the weak convergence of $u_n\in E$,  $u_n\rightharpoonup u $ in $L^{p+1}_g(\Omega)$, so $u_ng^{1/(p+1)}$ is bounded in $L^{p+1}(\Omega)$. Moreover, for $t\in [1,p+1)$ the H\"older inequality implies the uniform integrability of $|u_n|^{t}g^{\frac{t}{p+1}}$:      
			\begin{align*}
				\lim_{|E|\rightarrow 0}\int_{E}|u_n|^{t}g^{\frac{t}{p+1}}&=	\lim_{|E|\rightarrow 0}\int_{\Omega}|u_n|^{t}g^{\frac{t}{p+1}} \chi_{E}				
				\\&\leq \lim_{|E|\rightarrow 0}\||u_n|^{t}g^{\frac{t}{p+1}}\|_{L^{\frac{p+1}{t}}(\Omega)}\|\chi_E\|_{L^{\frac{p+1}{p+1-t}}(\Omega)}\\
				&=	\lim_{|E|\rightarrow 0}\||u_n|g^{\frac{1}{p+1}}\|^t_{L^{p+1}(\Omega)}|E|^{1-\frac{t}{p+1}}\\&\leq C\lim_{|E|\rightarrow 0}|E|^{1-\frac{t}{p+1}}=0 .
			\end{align*}  	
			Therefore, due to the Vitali Theorem,
			\begin{equation}\label{solutionminimum4.1}
				\int_{\Omega}|u_n|^{t}g^{\frac{t}{p+1}}\rightarrow \int_{\Omega}|u|^{t}g^{\frac{t}{p+1}}.
			\end{equation}
			Furthermore, as a consequence of \eqref{solutionminimum4.1}, $u_ng^{1/(p+1)}$ has strong convergence in $L^{t}(\Omega)$ for $t\in [1,p+1)$. This guarantees the existence of a function $H\in L^{t}(\Omega)$ such that $|u_ng^{1/(p+1)}|\leq H$.
			
			Thus, $u_n^2/|x|^{2s}$ is bounded by $H^2/(|x|^{2s}g^{2/(p+1)})$.
			Besides, through the use of the H\"older inequality with exponent $t/2$ for $t\in (2,p+1)$, 
			\[\int_{\Omega}\frac{H^2}{|x|^{2s}g^{\frac{2}{p+1}}}\leq 
			\left(\int_{\Omega}\frac{1}{|x|^{\frac{2st}{t-2}}g^{\frac{2t}{(t-2)(p+1)}}}\right)
			^{\frac{t-2}{t}}
			\left(\int_{\Omega}H^{t}\right)^{\frac{2}
				{t}}.\]
			This expression is bounded by  condition \eqref{Cond2} and by the fact that $H\in L^{t}(\Omega)$.
			
			Moreover, $u_n^2/|x|^{2s}\rightarrow u^2/|x|^{2s}$ a.e., by the convergence a.e. of $u_n$, so the Lebesgue Dominated Convergence Theorem  provides  the limit \eqref{solutionminimum4}.

			Hence, from \eqref{solutionminimum1}, \eqref{solutionminimum2}, \eqref{solutionminimum3} and \eqref{solutionminimum4}, \eqref{solutionminimum} follows. 
			
		\end{proof}

		\section{Regularity of the solution}
		This section analyses the regularity achieved by the solution $u$ to the problem \eqref{problemaoriginal} provided by Theorem \ref{teo1}, depending on the summability of the function $f$.\par 
		In Theorem \ref{teo1} and Theorem \ref{propo} the solution $u$ belongs to $E$ when ${f\in L^{(p+1)/p}_g(\Omega)}$. The next result establishes that the summability of $u$ can be improved as long as the regularity of $f$ increases due to the condition \eqref{cond3}.
		\begin{theorem} \label{teoreg}
			Assume that $g\in L^1_{loc}(\Omega)$, $g>0$ a.e. in $\Omega$, $\lambda$ is a real constant, \eqref{cond3} is satisfied and $q\geq (p+1)/p$ such that $f\in L^q_g(\Omega)\cap L^{(p+1)/p}_g(\Omega)$.	If $u$ is a solution to \eqref{problemaoriginal}, then $u\in L^{p q}_g(\Omega)$.
		\end{theorem}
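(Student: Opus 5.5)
We test the weak formulation (Definition \ref{def:sol}) with a power-type truncation of $u$, in the spirit of Boccardo--Orsina. For $k>0$ let $T_k(r)=\max\{-k,\min\{k,r\}\}$, and for $\gamma>0$ consider
\[
w=|T_k(u)|^{\gamma-1}T_k(u),\qquad \gamma:=p(q-1)\ge 1 .
\]
The constraint $\gamma\ge 1$ follows from $q\ge (p+1)/p$. The function $w$ is bounded and is a Lipschitz function of $u$ vanishing at $0$, so $w\in H^s_0(\Omega)$. Since $|w|\le |u|^{\gamma}$ and $|w|\le k^{\gamma}$, we also have $w\in L^{p+1}_g(\Omega)$; hence $w\in E$ is admissible. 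The nonlocal term is nonnegative, because $r\mapsto |T_k(r)|^{\gamma-1}T_k(r)$ is nondecreasing and so $(u(x)-u(y))(w(x)-w(y))\ge 0$. Dropping it gives
\[
\int_\Omega |u|^{p}|T_k(u)|^{\gamma}g\le \lambda\int_\Omega \frac{|u||T_k(u)|^{\gamma}}{|x|^{2s}}+\int_\Omega |f||T_k(u)|^{\gamma}g .
\]
If $\lambda\le 0$ the Hardy term already has the good sign. If $\lambda>0$ we absorb it, and this absorption is the main obstacle and the place where \eqref{cond3} enters.

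Write $I_k=\int_\Omega |u|^p|T_k(u)|^\gamma g$, which is finite since $|u|^p|T_k(u)|^{\gamma} g\le k^\gamma|u|^pg\in L^1$. Both right-hand terms are bounded by means of $|u||T_k(u)|^\gamma\le |u|^{1+\gamma}$ and $|T_k(u)|^\gamma\le |u|^{\gamma}$. To keep them controlled by $I_k$ rather than by $\int|u|^{p+\gamma}g$, we use $|T_k(u)|\le |u|$ and work with the measure $d\mu=|T_k(u)|^{\gamma}\,dx$.

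For the $f$ term we apply Young's inequality with exponents $q$ and $q'=q/(q-1)$:
\[
|f||T_k(u)|^\gamma g=|f|g^{1/q}\cdot |T_k(u)|^{\gamma} g^{1/q'} \le \varepsilon |T_k(u)|^{\gamma q'}g+C_\varepsilon|f|^qg .
\]
Here $\gamma q'=pq$, and $|T_k(u)|^{pq}\le |u|^p|T_k(u)|^{\gamma}$ because $pq=p+\gamma$. For the Hardy term we apply Young's inequality with exponents $\frac{p+\gamma}{1+\gamma}$ and $\frac{p+\gamma}{p-1}$:
\[
\frac{|u||T_k(u)|^{\gamma}}{|x|^{2s}}\le \varepsilon\,|u|^{\frac{p+\gamma}{1+\gamma}}|T_k(u)|^{\frac{\gamma(p+\gamma)}{1+\gamma}}\,g\;+\;C_\varepsilon\,|x|^{-\frac{2s(p+\gamma)}{p-1}}g^{-\frac{1+\gamma}{p-1}} .
\]
Since $|T_k(u)|\le|u|$, the first term is at most $\varepsilon |u|^p|T_k(u)|^{\gamma}g$ pointwise once one checks the exponents. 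The cleanest way is to split the domain into $\{|u|\le k\}$, where $T_k(u)=u$ and everything becomes $|u|^{p+\gamma}g$, and $\{|u|>k\}$, where $T_k(u)=\pm k$ and $|u|^ak^{b}\le |u|^pk^{\gamma}$ whenever $a\le p$ and $a+b\le p+\gamma$. These inequalities hold because $(p+\gamma)/(1+\gamma)\le p$ (equivalent to $p\ge1$). The last term is integrable because $\frac{2s(p+\gamma)}{p-1}=\frac{2spq}{p-1}$ and $\frac{1+\gamma}{p-1}=\frac{pq}{p-1}-1$; this is exactly \eqref{cond3}.

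Choosing $\varepsilon$ small, for instance $\varepsilon(|\lambda|+1)<1/2$, gives
\[
I_k\le C\Big(\int_\Omega |f|^qg+\int_\Omega |x|^{\frac{2spq}{1-p}}g^{1-\frac{pq}{p-1}}\Big),
\]
with $C$ independent of $k$. Letting $k\to\infty$ and using Fatou's lemma gives $\int_\Omega|u|^{pq}g<\infty$, that is, $u\in L^{pq}_g(\Omega)$. The delicate points to verify carefully are the admissibility of $w$ in $E$ and the exponent comparison on $\{|u|>k\}$ in the Hardy term.
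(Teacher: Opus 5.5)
Your proof is correct and is essentially the paper's argument. It uses the same test function $|T_k(u)|^{p(q-1)-1}T_k(u)$, the same monotonicity argument to drop the nonlocal term, and the same exponent splittings: $q$ for the $f$-term, and $\frac{pq}{pq-p+1}=p-\theta$ with $\frac{pq}{p-1}$ for the Hardy term. The only change is that you use Young's inequality and absorption where the paper uses H\"older's inequality in terms of $F_k(u)=|u|^{p-\theta}|T_k(u)|^{\rho+\theta+1}g$, which is exactly your $\varepsilon$-term.

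One small fix is needed. Do not justify $I_k<\infty$ through $|u|^pg\in L^1(\Omega)$, since that requires $g\in L^1(\Omega)$, which is not assumed. Instead use $|T_k(u)|^{\gamma}\le k^{\gamma-1}|u|$ (valid because $\gamma\ge 1$), so the integrand is bounded by $k^{\gamma-1}|u|^{p+1}g\in L^1(\Omega)$.
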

		
		\begin{proof}
			Let $u\in E$ be a solution of \eqref{problemaoriginal}, then the theorem is proved if the following claim holds true:
			\begin{equation}\label{resultado}
				\int_\Omega |u|^{p q}g<+\infty.
			\end{equation}	
			With the aim of verifying this claim,  denote $\rho=p(q-1)-1$ which is positive since $q\geq (p +1)/p $, and define for every $k>0$ a function $T_k:\mathbb{R}\rightarrow \mathbb{R}$ as:
			\begin{equation*} T_k(x)=
				\left\{ \begin{array}{ll}
					k, & \text{if $x>k$,} \\
					
					x, & \text{if $|x|\leq k$,} \\
					-k, & \text{if $x<-k.$} 
				\end{array}
				\right.
			\end{equation*}	
			Owing to this definition, $|T_k(u)|^{\rho}T_k(u)$ can be regarded as a test function; therefore, the next equality is satisfied:
			\begin{align*}
				\int_{\mathbb{R}^N}&(-\Delta)^{\frac{s}{2}}u(-\Delta)^{\frac{s}{2}}(|T_k(u)|^{\rho}T_k(u))+\int_{\Omega}|u|^{p -1}u|T_k(u)|^{\rho}T_k(u)g\\&
				-\lambda\int_{\Omega}\frac{u}{|x|^{2s}}|T_k(u)|^{\rho}T_k(u)-\int_{\Omega}fg|T_k(u)|^{\rho}T_k(u)=0.
			\end{align*}
			Moreover,  $(-\Delta)^{s/2}u(-\Delta)^{s/2}(|T_k(u)|^{\rho}T_k(u))\geq 0$, see \ref{appendix:A}. Thus,
			\begin{align*}
				\int_{\Omega}|u|^{p -1}u|T_k(u)|^{\rho}T_k(u)g\leq& \lambda\int_{\Omega}\frac{u}{|x|^{2s}}|T_k(u)|^{\rho}T_k(u)
				+\int_{\Omega}fg|T_k(u)|^{\rho}T_k(u).
			\end{align*}
			Since $T_k(u)u=|T_k(u)u|$, the above inequality is equivalent to
			\begin{equation}\label{exfinal}
				\int_{\Omega}|u|^{p }|T_k(u)|^{\rho+1}g\leq\lambda\int_{\Omega}\frac{|u|}{|x|^{2s}}|T_k(u)|^{\rho+1}+\int_{\Omega}fg|T_k(u)|^{\rho}T_k(u).
			\end{equation}
			The next step of the proof is to bound each term of the inequality and obtain \eqref{resultado}. Let
			\[ 
			F_k(u)=|u|^{p -\theta}|T_k(u)|^{\rho+\theta+1}g,
			\]	
			where $\theta=(1+\rho)(p -1)/(\rho+2)=p(q-1)(p-1)/(p(q-1)+1)\in (0,p-1)$.
			
			There exist two options taking into account the sign of $\lambda$:
			
			\begin{enumerate}
				\item[a)] Case $\lambda>0$.\par 
				Due to the definition of $T_k$, $|T_k(x)|\leq |x|$ $\forall x \in \mathbb{R}$, hence
				\[|u|^{p }|T_k(u)|^{\rho+1}g=\frac{F_k(u)|T_k(u)|^{-\theta}}{|u|^{-\theta}}\geq F_k(u),\]
				then,
				\begin{equation}\label{ex1}
					\int_{\Omega} |u|^{p }|T_k(u)|^{\rho+1}g\geq \int_{\Omega}F_k(u).
				\end{equation}
				Moreover, owing to the fact that $1+\theta+\rho=(1+\rho)(p -\theta)$, the definition of $\theta$ and the H\"older inequality with exponent $p -\theta>1$, 		
				\begin{align*}
					\lambda\int_{\Omega}\frac{|u|}{|x|^{2s}}|T_k(u)|^{\rho+1}&=\lambda\int_{\Omega}\frac{|u|g^{\frac{1}{p -\theta}}}{|x|^{2s}g^{\frac{1}{p -\theta}}}|T_k(u)|^{\rho+1}\\&\leq \lambda \left\|\frac{1}{|x|^{2s}g^{\frac{1}{p -\theta}}}\right\|_{L^{\frac{p q}{p -1}}(\Omega)}\||u|g^{\frac{1}{p -\theta}}|T_k(u)|^{\rho+1}\|_{L^{p -\theta}(\Omega)}.
				\end{align*}
				Finally, as a consequence of $\eqref{cond3}$, there exists some $C_3>0$ such that
				\begin{align}\label{ex2}
					\lambda \left\|\frac{1}{|x|^{2s}g^{\frac{1}{p -\theta}}}\right\|_{L^{\frac{p q}{p -1}}(\Omega)}&\| |u|g^{\frac{1}{p -\theta}}|T_k(u)|^{\rho+1}\|_{L^{p -\theta}(\Omega)}
				\end{align}
				\[\leq C_3 \left(\int_{\Omega}F_k(u)\right)^{\frac{1}{p -\theta}}.\]
				In addition, making use of the H\"older inequality with exponent $q$,
				
				\begin{align*}
					\int_{\Omega}fg|T_k(u)|^{\rho}T_k(u)&=\int_{\Omega}fg^{\frac{1}{q}}|T_k(u)|^{\rho}T_k(u)g^{\frac{q-1}{q}}
					\\&\leq\|fg^{\frac{1}{q}}\|_{L^q(\Omega)}\||T_k(u)|^{\rho+1}g^{\frac{q-1}{q}}\|_{L^{\frac{q}{q-1}}(\Omega)}.
				\end{align*}
				Since $f\in L^q_g(\Omega)\cap L^{(p+1)/p}_g(\Omega)$ with $q\geq (p+1)/p$, it holds that for some $C_4>0$,
				\begin{align*}
					\|fg^{\frac{1}{q}}\|_{L^q(\Omega)}\||T_k(u)|^{\rho+1}g^{\frac{q-1}{q}}\|_{L^{\frac{q}{q-1}}(\Omega)}  \leq& C_4\||T_k(u)|^{\rho+1}g^{\frac{q-1}{q}}\|_{L^{\frac{q}{q-1}}(\Omega)}\\
					& = C_4 \left( \int_{\Omega} |T_k|^{\frac{q(\rho+1)}{q-1}}g\right)^{\frac{q-1}{q}}.
				\end{align*}
				Taking into account that $q(\rho+1)/(q-1)=\rho +1+p$, and $|T_k(x)|\leq |x|$,  it follows that
				\begin{align*}
					&C_4 \left( \int_{\Omega} |T_k|^{\frac{q(\rho+1)}{q-1}} g\right) = C_4 \left( \int_{\Omega} |T_k|^{\rho +1+\theta-\theta + p} g\right)
					\\ &\leq C_4 \left( \int_{\Omega} |T_k|^{\rho +1+\theta}|u|^{p-\theta} g\right)= C_4 \left( \int_{\Omega} F_k(u)\right). 
				\end{align*}
				Then, 
				\begin{equation}\label{ex3}
					\int_{\Omega}fg|T_k(u)|^{\rho}T_k(u)\leq C_4\left( \int_{\Omega} F_k(u)\right)^{\frac{q-1}{q}}.
				\end{equation}
				Therefore, by the inequalities \eqref{ex1}, \eqref{ex2} and \eqref{ex3}, \eqref{exfinal} can be rewritten as 
				\[\int_{\Omega}F_k(u)\leq  C_3 \left(\int_{\Omega}F_k(u)\right)^{\frac{1}{p -\theta}}+C_4\left(\int_{\Omega}F_k(u)\right)^{\frac{q-1}{q}}.\]		
				Since $1/(p -\theta)= (pq-p+1)/pq$ and $(q-1)/q$ are less than 1, there exist positive constants $c$, $C_5$ such that 
				\[\int_{\Omega}|u|^{p -\theta}|T_k(u)|^{\rho+\theta+1}=\int_{\Omega}F_k(u)\leq C_5, \text{ $\forall k\geq c$}.\]
				As a final step, notice that $\rho+1+p =p q$, so by making $k\rightarrow +\infty$ the Fatou lemma yields \eqref{resultado},
				\[\int_{\Omega}|u|^{p q}g=\int_{\Omega}|u|^{\rho+1+p }g\leq C_5.\]
				\item[b)] Case $\lambda\leq 0$.\par 
				This case can be argued analogously to the case $\lambda >0$, since from \eqref{exfinal} it is achieved that
				\[\int_{\Omega}|u|^{p -1}u|T_k(u)|^{\rho}T_k(u)g\leq \int_{\Omega}fg|T_k(u)|^{\rho}T_k(u).\] 
			\end{enumerate}
		\end{proof}
		\begin{remark}\label{summabilityintegralg}
			Regarding the summability of $f$, note that if ${g\in L^1(\Omega)}$, the condition $f\in L^q_g(\Omega)$ is sufficient, as it ensures that $f$ belongs to $L^{(p+1)/p}_g(\Omega)$. \par
			Indeed, consider $0<c\in \mathbb{R}$ such that $1/q+c=p/(p+1)$ and the following equality 
			\[\int_{\Omega}f g^{\frac{p}{p+1}}g^{-c}g^{c}=\int_{\Omega}f g^{\frac{1}{q}}g^{c}.\]	
			By applying H\"older inequality with exponent $q$ and the fact that $f\in L^{q}_g(\Omega)$,
			\[\int_{\Omega}f g^{\frac{1}{q}}g^{c}\leq \|f g^{\frac{1}{q}}\|_{L^q(\Omega)}\|g^c\|_{L^{\frac{q}{q-1}}(\Omega)} \leq |f|_{L^{q}_g(\Omega)} \|g^c\|_{L^{\frac{q}{q-1}}(\Omega)}.\]	
			In addition, since $g\in L^1(\Omega)$, then $g^c\in L^{1/c}(\Omega)\subset L^{q/(q-1)}(\Omega)$, because $1/c\geq q/(q-1)$. Thus, $\int_{\Omega}f g^{p/(p+1)}g^{-c}g^{c}<+\infty$ and $f\in L^{(p+1)/p}_g(\Omega)$.
			
		\end{remark}
		
		As an immediate consequence of Theorem \ref{teo1} and Theorem \ref{teoreg}, the following corollary arises.

		\begin{corollary}\label{finalregularity}
			Consider $q\geq (p+1)/p$ such that $f\in L^q_g(\Omega)\cap L^{(p+1)/p}_g(\Omega)$ and that conditions \eqref{condicion} and \eqref{cond3} are satisfied. 
			Then, problem \eqref{problemaoriginal} has a solution $u\in H^s_0(\Omega)\cap L^{p q}_g(\Omega) $ for every $\lambda \in \mathbb{R}$.
		\end{corollary}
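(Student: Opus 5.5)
The corollary follows by chaining Theorem \ref{teo1} and Theorem \ref{teoreg}; the only things to check are that the hypotheses match and that the two notions of solution coincide.

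First, the hypotheses of Theorem \ref{teo1} hold. We have $f\in L^{(p+1)/p}_g(\Omega)$, $g>0$ lies in $L^1_{loc}(\Omega)$, $\Omega$ is smooth and bounded with $0\in\Omega$, and \eqref{condicion} is assumed. So for every $\lambda\in\mathbb{R}$ Theorem \ref{teo1} gives a solution $u\in E=H^s_0(\Omega)\cap L^{p+1}_g(\Omega)$ of \eqref{problemaoriginal}, in the sense of Definition \ref{def:sol}. In particular $u\in H^s_0(\Omega)$.

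Second, apply Theorem \ref{teoreg} to this same $u$. Its hypotheses are $g\in L^1_{loc}(\Omega)$ with $g>0$ a.e., $\lambda$ real, \eqref{cond3}, $q\ge (p+1)/p$, and $f\in L^q_g(\Omega)\cap L^{(p+1)/p}_g(\Omega)$; all of these are assumed in the corollary. Theorem \ref{teoreg} applies to any solution in the sense of Definition \ref{def:sol}, which is exactly what Theorem \ref{teo1} produces. Hence $u\in L^{pq}_g(\Omega)$, and combined with the first step, $u\in H^s_0(\Omega)\cap L^{pq}_g(\Omega)$ for every $\lambda\in\mathbb{R}$.

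There is essentially no obstacle here. The one point to confirm is that the solution from Theorem \ref{teo1} is admissible in Theorem \ref{teoreg}. This holds because the truncated test functions $|T_k(u)|^\rho T_k(u)$ used in the proof of Theorem \ref{teoreg} are bounded functions lying in $H^s_0(\Omega)$ whenever $u$ does. They also belong to $L^{p+1}_g(\Omega)$, since $|T_k(u)|^{\rho+1}\le k^{\rho}|u|$ and $u\in L^{p+1}_g(\Omega)$. So they lie in $E$ and are legitimate test functions in Definition \ref{def:sol}.
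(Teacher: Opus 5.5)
Your proposal is correct and matches the paper, which presents the corollary as an immediate consequence of Theorem \ref{teo1} (existence of a solution in $E$ under \eqref{condicion}) followed by Theorem \ref{teoreg} (applied to that same solution under \eqref{cond3}). Your extra check that $|T_k(u)|^{\rho}T_k(u)$ lies in $E$ is correct and is something the paper leaves implicit. It holds because $t\mapsto |T_k(t)|^{\rho}T_k(t)$ is Lipschitz (as $\rho\ge 0$), vanishes at $0$, and is dominated by $k^{\rho}|t|$.
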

		There are various notable problems, depending on the choice of function $g$, to which Theorem \ref{teo1} and Theorem \ref{teoreg} can be applied. For instance, the next corollary deals with problems involving two Hardy potentials. 
		\begin{corollary}\label{twohardy}
			If $\mu>0$ and $f\in L^{q}_{\mu/|x|^{(p+1)s}}(\Omega)$, with $q\geq (p+1)/p$ and $1<p<(N-s)/s$. Then, the problem 
			\begin{equation}\label{problemtwohardy}
				\begin{cases}
					\displaystyle(-\Delta)^s u+ \mu\frac{|u|^{p-1}u}{|x|^{(p+1)s}}= \lambda \frac{u}{|x|^{2s}}+f(x), &  \text{in $\Omega$,}\\
					u=0, & \text{in $\mathbb{R}^N\setminus\Omega$,}
				\end{cases}
			\end{equation}
			has a solution which belongs to $ H^s_0(\Omega)\cap L^{pq}_{\mu/|x|^{(p+1)s}}(\Omega),$ for every $\lambda\in \mathbb{R}$.
		\end{corollary}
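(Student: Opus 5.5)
The plan is to obtain Corollary \ref{twohardy} as a direct application of Corollary \ref{finalregularity} with the weight
\[
g(x)=\frac{\mu}{|x|^{(p+1)s}}.
\]
Problem \eqref{problemtwohardy} is exactly \eqref{problemaoriginal} for this choice of $g$. So it suffices to check four things: $0<g\in L^1_{loc}(\Omega)$, $f\in L^{(p+1)/p}_g(\Omega)$, condition \eqref{condicion} and condition \eqref{cond3}.

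\textbf{Positivity, integrability and the hypothesis on $f$.} Clearly $g>0$ in $\Omega\setminus\{0\}$. The restriction $p<(N-s)/s$ is equivalent to $(p+1)s<N$. Since $\Omega$ is bounded, $|x|^{-(p+1)s}$ is then integrable near the origin, so in fact $g\in L^1(\Omega)$. By Remark \ref{summabilityintegralg}, $f\in L^q_g(\Omega)$ with $q\geq (p+1)/p$ then gives $f\in L^{(p+1)/p}_g(\Omega)$. Hence $f\in L^q_g(\Omega)\cap L^{(p+1)/p}_g(\Omega)$, as Corollary \ref{finalregularity} requires.

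\textbf{Condition \eqref{condicion}.} Compute
\[
\frac{1}{|x|^{2s}g^{\frac{2}{p+1}}}=\mu^{-\frac{2}{p+1}}\,\frac{|x|^{2s}}{|x|^{2s}}=\mu^{-\frac{2}{p+1}}.
\]
This is a constant, and it lies in $L^{(p+1)/(p-1)}(\Omega)$ because $|\Omega|<+\infty$.

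\textbf{Condition \eqref{cond3}.} The integrand equals a power of $\mu$ times $|x|^{\gamma}$, where
\[
\gamma=-\frac{2spq}{p-1}-(p+1)s\left(1-\frac{pq}{p-1}\right)=\frac{spq(p+1-2)}{p-1}-(p+1)s=s\,(pq-p-1).
\]
Since $q\geq (p+1)/p$, we have $\gamma\geq 0$. Thus the integrand is bounded on the bounded set $\Omega$, and \eqref{cond3} holds.

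\textbf{Conclusion.} With all hypotheses verified, Corollary \ref{finalregularity} yields, for every $\lambda\in\mathbb{R}$, a solution $u\in H^s_0(\Omega)\cap L^{pq}_{\mu/|x|^{(p+1)s}}(\Omega)$ of \eqref{problemtwohardy}.

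The only delicate point is the exponent bookkeeping in \eqref{cond3}, that is, checking that $\gamma\geq 0$. The rest is immediate. The upper bound on $p$ enters only to guarantee $g\in L^1(\Omega)$, which is what allows Remark \ref{summabilityintegralg} to be used.
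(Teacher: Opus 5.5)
Your proposal is correct and matches the paper's proof essentially step for step. You set $g=\mu/|x|^{(p+1)s}$, use $(p+1)s<N$ to get $g\in L^1(\Omega)$ and hence $f\in L^{(p+1)/p}_g(\Omega)$ via Remark \ref{summabilityintegralg}, note that the integrand in \eqref{condicion} is the constant $\mu^{-2/(p+1)}$ and that the one in \eqref{cond3} is a multiple of $|x|^{s(pq-p-1)}$ with nonnegative exponent, and conclude with Corollary \ref{finalregularity}. One small refinement to your closing remark: since $0\in\Omega$, the bound $(p+1)s<N$ is already what makes $g\in L^1_{loc}(\Omega)$ at all, so it is needed for the standing hypotheses of Theorems \ref{teo1} and \ref{teoreg}, not only for invoking the Remark.
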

		\begin{proof}
			Due to the assumptions imposed on $p$, it is guaranteed that the function $\mu/|x|^{(p+1)s}$, denoted as $g$, is a positive function in $L^1(\Omega)$. On the other hand, condition \eqref{condicion} is satisfied for this function $g$ since
			\[
			\int_{\Omega} C(\mu)|x|^{\frac{2(p+1)s}{1-p}}|x|^{\frac{-2(p+1)s}{1-p}}=C(\mu)|\Omega|
			\]
			where $C(\mu)$ refers to a constant depending on $\mu$. Furthermore, taking into account that $q\geq (p+1)/p$, condition \eqref{cond3} also holds
			\[
			\int_{\Omega} C(\mu)|x|^{\frac{2spq}{1-p}}|x|^{(-(p+1)s)(1-\frac{pq}{p-1})} =C(\mu)\int_{\Omega} |x|^{s(pq-(p+1))}<+\infty.
			\]
			As a consequence, by Remark \ref{summabilityintegralg} and Corollary \ref{finalregularity} the proof is completed.
		\end{proof}

		Finally, it is worth highlighting the influence of the lower order term concerning the regularity and existence of the solutions to other problems in the absence of it.
		
		\begin{corollary}\label{problemboundedg}
			Consider $g\in L^1(\Omega)$ such that $g\geq k>0$ a.e. in $\Omega$, $p>2^*_s-1$ and $(p+1)/p\leq q<\frac{N}{2s}(1-\frac{1}{p})$. Hence, there exists a solution ${u\in L_g^{pq}(\Omega)\cap H^s_0(\Omega)}$ for every value of $\lambda\in \mathbb{R}$ to
			\begin{equation*}
				\begin{cases}
					\displaystyle(-\Delta)^s u+ g|u|^{p-1}u=\lambda\frac{u}{|x|^{2s}}+ f(x), &  \text{in $\Omega$,}\\
					u=0, & \text{in $\mathbb{R}^N\setminus\Omega$,}
				\end{cases}
			\end{equation*}	
			with $f\in L_g^q(\Omega)$.\par
		\end{corollary}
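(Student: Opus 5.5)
The plan is to obtain Corollary \ref{problemboundedg} from Corollary \ref{finalregularity}. That reduces the task to checking, for a $g\in L^1(\Omega)$ with $g\geq k>0$, four things: $f\in L^{(p+1)/p}_g(\Omega)$, condition \eqref{condicion}, condition \eqref{cond3}, and $g>0$ with $g\in L^1_{loc}(\Omega)$. The last of these is immediate.

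\emph{Integrability of $f$.} Since $g\in L^1(\Omega)$ and $q\geq (p+1)/p$, Remark \ref{summabilityintegralg} gives $L^q_g(\Omega)\subset L^{(p+1)/p}_g(\Omega)$. Hence $f\in L^q_g(\Omega)\cap L^{(p+1)/p}_g(\Omega)$.

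\emph{Condition \eqref{condicion}.} The lower bound $g\geq k$ gives
\[
\frac{1}{|x|^{2s}g^{\frac{2}{p+1}}}\leq k^{-\frac{2}{p+1}}|x|^{-2s}.
\]
So it suffices that $|x|^{-2s(p+1)/(p-1)}\in L^1(\Omega)$. Since $\Omega$ is bounded, this holds exactly when $2s(p+1)/(p-1)<N$, which is equivalent to $p>(N+2s)/(N-2s)=2^*_s-1$. This is the assumed range of $p$.

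\emph{Condition \eqref{cond3}.} This is the only step needing care, because of the signs of the exponents. The exponent of $g$ is $1-\frac{pq}{p-1}$. It is negative, since $q\geq (p+1)/p$ gives $pq\geq p+1>p-1$. Therefore $g^{1-\frac{pq}{p-1}}\leq k^{1-\frac{pq}{p-1}}$, and the integrand in \eqref{cond3} is bounded by a constant times $|x|^{\frac{2spq}{1-p}}$. This power is integrable near $0$ iff $\frac{2spq}{p-1}<N$, i.e. $q<\frac{N}{2s}\bigl(1-\frac1p\bigr)$, which is the assumed upper bound on $q$. Note that $0\in\Omega$ and boundedness of $\Omega$ handle integrability away from the origin.

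With all four hypotheses verified, Corollary \ref{finalregularity} (that is, Theorem \ref{teo1} together with Theorem \ref{teoreg}) yields a solution $u\in H^s_0(\Omega)\cap L^{pq}_g(\Omega)$ for every $\lambda\in\mathbb{R}$.

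I do not expect a genuine obstacle. The only point to watch is the sign bookkeeping in \eqref{cond3}: the lower bound $g\geq k$ is what controls the negative power of $g$, while $g\in L^1(\Omega)$ is used only for the inclusion of $f$ into $L^{(p+1)/p}_g(\Omega)$.
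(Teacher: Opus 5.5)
Your proposal is correct and follows the paper's proof. Like the paper, you use $g\geq k$ to bound the negative powers of $g$ in \eqref{condicion} and \eqref{cond3}, and then integrate the resulting power of $|x|$ near the origin. Your explicit check that $f\in L^{(p+1)/p}_g(\Omega)$ fills in a hypothesis the paper's proof leaves implicit; it follows from Remark \ref{summabilityintegralg}, or directly from H\"older's inequality for the finite measure $g\,dx$.
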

		\begin{proof}
			This result is deduced from applying Theorem \ref{teo1} to obtain the existence of a solution for $\lambda\in \mathbb{R}$ and Theorem \ref{teoreg} to achieve its regularity in $L^{pq}_g(\Omega)\cap H^s_0(\Omega)$. Therefore, it is enough to prove that conditions \eqref{condicion} and \eqref{cond3} are satisfied. Indeed, since $q<\frac{N}{2s}(1-\frac{1}{p})$,
			\[\int_{\Omega}\frac{1}{|x|^{\frac{2spq}{p-1}}g^{\frac{pq-p+1}{p-1}}}\leq C(k)\int_{\Omega}\frac{1}{|x|^{\frac{2spq}{p-1}}}<+\infty,\]
			condition \eqref{cond3} holds true. On the other hand, for  $p>2^*_s-1$, it follows that
			\[\int_{\Omega}\frac{1}{|x|^{\frac{2s(p+1)}{p-1}}g^{\frac{2}{p-1}}}\leq C(k)\int_{\Omega}\frac{1}{|x|^{\frac{2s(p+1)}{p-1}}}<+\infty.\]
			So condition \eqref{condicion} is also satisfied.
		\end{proof}
		This corollary can be interpreted as follows: for $g\geq k>0$ a.e. in $\Omega$, taking $p=1/\varepsilon$ for $\varepsilon>0$ small enough, it is found that the solution belongs to $L_g^{q/\varepsilon}(\Omega)$ for every $f\in L_g^q(\Omega)$ with $q\in [1+\varepsilon, N/2s(1-\varepsilon))$. As a consequence, an improvement over the regularity and the existence of solutions is achieved with respect to \cite[Theorem 4.2, Theorem 4.9]{Ireneo} for $\lambda\neq 0$ and  \cite[Theorem 16, Theorem 24]{Leonori} for $\lambda=0$, where there is no lower order term.
		\appendix
		\section{}
		\label{appendix:A}
		In this appendix, the following inequality is proved
		\begin{equation}\label{eq:AppendixA}
			(-\Delta)^{\frac{s}{2}}u(-\Delta)^{\frac{s}{2}}(|T_k(u)|^{\rho}T_k(u))\geq 0.
		\end{equation}
		According to the definition of the fractional Laplacian operator:
		\begin{align*}
			&(-\Delta)^{\frac{s}{2}}u(x)(-\Delta)^{\frac{s}{2}}(|T_k(u(x))|^{\rho}T_k(u(x)))\\&=\frac{{c_{N,s}}}{2}\int_{\Omega}\frac{(u(x)-u(y))(|T_k(u(x))|^{\rho}T_k(u(x))-|T_k(u(y))|^{\rho}T_k(u(y)))}{|x-y|^{N+2s}},
		\end{align*}
		where
		\begin{displaymath} |T_k(x)|^{\rho}T_k(x)=
			\left\{ \begin{array}{ll}
				k^{\rho}k, & \mbox{ $x>k$,} \\
				
				|x|^{\rho}x, & \mbox{ $|x|\leq k$,} \\
				-k^{\rho}k, & \mbox{ $x<-k.$} 
			\end{array}
			\right.
		\end{displaymath}
		The expression \eqref{eq:AppendixA} is a consequence of the increasing character of $|T_k(\cdot)|^{\rho}T_k(\cdot)$. \par Note that \eqref{eq:AppendixA} is immediate when $u(x)$ and/or $u(y)$, are greater than $k$ or less than $-k$. The remaining cases can be deduced from the analysis of the two following cases.
		\begin{enumerate}
			\item If $|u(x)|\leq k$ and $|u(y)|\leq k$.\par
			The expression $\eqref{eq:AppendixA}$ follows if
			\begin{align}\label{eq2:AppendixA}
				&(u(x)-u(y))(|T_k(u(x))|^{\rho}T_k(u(x))-|T_k(u(y))|^{\rho}T_k(u(y)) )
				\\&=(u(x)-u(y))( |u(x)|^{\rho}u(x)-|u(y)|^{\rho}u(y) )\geq 0.\nonumber
			\end{align}
			
			When $u(x)\geq 0$, $u(y)\leq 0$, or $u(y)\geq 0$, $u(x)\leq 0$, \eqref{eq2:AppendixA} is immediate.\par 
			Otherwise, if $u(x)\geq u(y)$ such that $ |u(x)|\geq |u(y)|$, \eqref{eq2:AppendixA} still holds
			\begin{align*}
				&(u(x)-u(y))( |u(x)|^{\rho}u(x)-|u(y)|^{\rho}u(y) )\geq 0
				\\&\iff |u(x)|^{\rho}u(x)-|u(y)|^{\rho}u(y)\geq  0
				\\&\iff |u(x)|^{\rho}u(x)\geq |u(y)|^{\rho}u(y).
			\end{align*}
			
			In the case that $u(x)$ and $u(y)$ are negative values, satisfying $u(x)< u(y)$ and $ |u(x)|< |u(y)|$, then
			\[\frac{u(x)}{u(y)}> 1> \left(\frac{|u(y)|}{|u(x)|}\right)^{\rho}.\]
			Thus, \eqref{eq2:AppendixA} is satisfied since
			\begin{align*}
				&(u(x)-u(y))( |u(x)|^{\rho}u(x)-|u(y)|^{\rho}u(y) )> 0
				\\ &\iff |u(x)|^{\rho}u(x)-|u(y)|^{\rho}u(y) <0
				\\&\iff \frac{u(x)}{u(y)}>\left(\frac{|u(y)|}{|u(x)|}\right)^{\rho}.
			\end{align*}		
		
			\item If $|u(x)|\leq k$, $u(y)<-k$.\par 
			In this case, \eqref{eq:AppendixA} is achieved if
			\begin{align}\label{eq3:AppendixA}
				&(u(x)-u(y))(|T_k(u(x))|^{\rho}T_k(u(x))-|T_k(u(y))|^{\rho}T_k(u(y)))
				\\&=(u(x)-u(y))(|u(x)|^{\rho}u(x))-(-k^{\rho}k))>0.\nonumber
			\end{align}
			
			If $u(x)=0$, then $(u(x)-u(y))((|u(x)|^{\rho}u(x))-(-k^{\rho}k))> 0$.\par 
			On the other hand, since $u(x)\geq -k>u(y)$, 
			\[\frac{u(x)}{-k}\leq 1\leq \left(\frac{k}{|u(x)|}\right)^{\rho}.\]
			Then \eqref{eq3:AppendixA} is satisfied,
			\begin{align*}
				&(u(x)-u(y))((|u(x)|^{\rho}u(x))-(-|k|^{\rho}k))\geq 0
				\\&\iff|u(x)|^{\rho}u(x)+ k^{\rho}k\geq 0
				\\&\iff \frac{u(x)}{-k}\leq \left(\frac{k}{|u(x)|}\right)^{\rho}.
			\end{align*}
		\end{enumerate}
	\section*{Acknowledgements}
	The authors would like to thank the anonymous referees for their useful comments and suggestions, which have helped to improve the quality of this paper. This work is supported by Junta de Andaluc\'ia FQM--424 (Spain). The first author acknowledges additional support from University of Almeria's programme for research and knowledge transfer, "Plan Propio de Investigaci\'on y Transferencia de la Universidad de
	Almer\'ia".
		
		\section*{Data availability}
		No data was used for the research described in the article.

	\end{document}